\newtheorem{theorem}{Theorem}
\newcommand{\N}{\mathbb{N}}
\title{The method of weighted words revisited}
\author{Jehanne Dousse}
\address{Institut f\"ur Mathematik, Universit\"at Z\"urich\\ Winterthurerstrasse 190, 8057 Z\"urich, Switzerland}
\email{jehanne.dousse@math.uzh.ch}
\begin{document}

\begin{abstract}
Alladi and Gordon introduced the method of weighted words in 1993 to prove a refinement and generalisation of Schur's partition identity. Together with Andrews, they later used it to refine Capparelli's and G\"ollnitz' identities too. In this paper, we present a new variant of this method, which can be used to study more complicated partition identities, and apply it to prove refinements and generalisations of three partition identities. The first one, Siladi\'c's theorem (2002), comes from vertex operator algebras. The second one, a conjectural identity of Primc (1999), comes from crystal base theory. The last one is a very general identity about coloured overpartitions which generalises and unifies several generalisations of Schur's theorem due to Alladi-Gordon, Andrews, Corteel-Lovejoy, Lovejoy and the author.
\end{abstract}

\maketitle

\section{Introduction and principle of the method}
\subsection{The original method}
A partition of $n$ is a non-increasing sequence of positive integers whose sum is $n$. For example, the $5$ partitions of $4$ are $4$, $3+1$, $2+2$, $2+1+1$ and $1+1+1+1$.

The method of weighted words was introduced by Alladi and Gordon~\cite{Alladi1} to give refinement of Rogers-Ramanujan type partition identities, which are theorems of the form ``for all $n$, the number of partitions of $n$ whose parts satisfy some difference conditions is equal to the number of partitions of $n$ whose parts satisfy some congruence conditions.''
The first identity they applied it to in~\cite{Alladi,AllGor} was Schur's theorem~\cite{Schur}:
\begin{theorem}[Schur]
\label{schur}
For any integer $n$, let $A(n)$ denote the number of partitions of $n$ into distinct parts congruent to $1$ or $2$ modulo $3$ and $B(n)$ the number of partitions of $n$ such that parts differ by at least $3$ and no two consecutive multiples of $3$ appear. Then for all $n$, $A(n)=B(n).$
\end{theorem}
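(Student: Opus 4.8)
The plan is to prove Theorem~\ref{schur} by the method under discussion, the method of weighted words --- this being precisely the application for which Alladi and Gordon designed it. The idea is to replace the congruences modulo $3$ by a colouring, prove a ``colour version'' of the identity involving only difference conditions and distinctness (and no congruences), and then deduce Theorem~\ref{schur} from it by dilation.

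\emph{Step 1 (reformulation via coloured integers).} Introduce three colours $a$, $b$, $ab$ and identify the integer $3m-2$ with a part of value $m$ and colour $a$, the integer $3m-1$ with value $m$ and colour $b$, and the integer $3m$ with value $m$ and colour $ab$, ordered $a_{1}<b_{1}<(ab)_{1}<a_{2}<b_{2}<(ab)_{2}<\cdots$. Then the partitions counted by $A(n)$ are exactly the partitions into distinct coloured integers avoiding the colour $ab$, and, grading by the number $i$ of $a$'s and number $j$ of $b$'s (an $ab$-part counting once for each colour it carries), their generating function is $\prod_{m\ge1}(1+aq^{3m-2})(1+bq^{3m-1})$. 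Transporting the conditions defining $B(n)$ through the same identification yields a fixed table of admissible differences between consecutive coloured parts: the gap $\ge 3$ becomes, for instance, the rule that a part of colour $a$ may not be immediately followed by the part of colour $b$ of the same value, while ``no two consecutive multiples of $3$'' becomes exactly the rule that two parts of colour $ab$ have values differing by at least $2$. Thus Theorem~\ref{schur}, and its refinement by $i$ and $j$, is equivalent to the statement that the generating function for partitions obeying this table equals $\prod_{m\ge1}(1+aq^{3m-2})(1+bq^{3m-1})$.

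\emph{Step 2 (the colour identity).} I would establish this equality by a $q$-difference equation. Let $f_{m}$ be the generating function for admissible coloured partitions all of whose parts have value at least $m$ (tracking, with an auxiliary variable, the number of $ab$-parts, since this is what a value-shift distorts); $f_{1}$ is the series we want. Removing the smallest part and using the table to see which parts may appear above it gives a finite linear system relating $f_{m}$, $f_{m+1}$ and $f_{m+2}$, which iterates to a closed functional equation for $f_{1}$ with normalisation ``empty partition $\mapsto 1$''. One then checks that the matching refinement of the product $\prod_{m\ge1}(1+aq^{3m-2})(1+bq^{3m-1})$ obeys the same equation and normalisation; since such an equation determines a power series coefficient by coefficient in $q$, the two series agree. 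Extracting coefficients (the dilation is built into Step~1) gives $A(n)=B(n)$.

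\emph{Main obstacle.} The difficulty is combinatorial rather than analytic. Writing the table of coloured difference conditions without error, and then checking that the product side satisfies \emph{the same} functional equation as the gap side, is the crux, and all of the friction is created by the colour $ab$, that is, by the multiples of $3$: the clause ``no two consecutive multiples of $3$'' couples a part to its neighbour \emph{only} when both are multiples of $3$, which is exactly what forces a third colour on us, and the contribution of choosing both $a_{1}$ and $b_{1}$ in the product must be reconciled with that of a single $ab$-part on the gap side. A purely bijective proof --- split each $ab$-part into an $a$-part and a $b$-part and re-sort, in the style of Alladi and Gordon or of Bressoud --- avoids the $q$-series but concentrates exactly this difficulty into showing the map is well defined and invertible.
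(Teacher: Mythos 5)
Your proposal follows essentially the same route as the paper: recolour the residues modulo $3$ as $a$, $b$, $ab$, translate the gap and no-consecutive-multiples conditions into a table of coloured difference conditions, and pin down the refined generating function by a recurrence/$q$-difference argument identifying it with $(-aq;q)_{\infty}(-bq;q)_{\infty}$. The only cosmetic differences are that the paper bounds the \emph{largest} part and indexes its generating functions $G_{k_a},G_{k_b},G_{k_{ab}}$ by coloured integers rather than bare values (your $f_m$ will need the same colour-refined indexing, since the allowed neighbours of the removed part depend on its colour), and it derives the product by iterating the shift identity $G_{\infty}(q;a,b)=(1+aq)(1+bq)G_{\infty}(q;aq,bq)$ rather than verifying the product against the functional equation.
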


The idea of the method of weighted words is to work at the ``non-dilated level'' and consider partitions into integers appearing in different colours $a$,$b$,$c$,... which, under the correct dilations, become the partitions of the considered theorem. The colours also represent free parameters which allow one to refine the theorems.

For example, in the case of Schur's theorem, Alladi and Gordon used three colours
\begin{equation} \label{colororder}
c < a < b,
\end{equation}
giving the order $ 1_c < 1_a < 1_b < 2_{c} < 2_a < 2_b < \cdots$ on coloured integers.
They considered partitions $\lambda_1 + \cdots + \lambda_s$ into coloured integers, with no part $1_c$, satisfying the difference conditions
$$\lambda_i - \lambda_{i+1} \geq \begin{cases} 2 \text{ if } c(\lambda_{i}) = ab \text{ or } c(\lambda_i) < c(\lambda_{i+1}) \text{ in \eqref{colororder}}\\
1 \text{ otherwise,}
\end{cases}$$
where $c(\lambda)$ denotes the colour of $\lambda$.
Under the dilations
\begin{equation}
\label{dilschur}
q \rightarrow q^3, a \rightarrow a q^{-2}, b \rightarrow bq^{-1}, c \rightarrow cq^{-3},
\end{equation}
the coloured integers are transformed as follows
$$k_a \rightarrow 3k-2, k_b \rightarrow 3k-1, k_c \rightarrow 3k-3,$$
and the partitions above become those of Schur's theorem.

Then, they try to find for which values of $a,b,c$ the generating function for these coloured partitions is an infinite product, which means that it is also the generating function for partitions with congruence conditions.

To do so, Alladi and Gordon used the fact that a partition with $n$ parts satisfying the difference conditions is a minimal partition with $n$ parts satisfying the difference conditions to which one has added a partition having at most $n$ parts. So they computed the generating function for such minimal partitions using $q$-binomial coefficients, and concluded that is an infinite product representing partitions with congruence conditions if and only if $c=ab$ by using $q$-series identities.

\begin{theorem}[Alladi-Gordon]
\label{schurnondil}
Let $A(u,v,n)$ be the number of partitions of $n$ into $u$ distinct parts coloured $a$ and $v$ distinct parts coloured $b$.
Let $B(u,v,n)$ be the number of partitions $\lambda_1 + \cdots + \lambda_s$ of $n$ into distinct parts, with no part $1_{ab}$, such that $\lambda_i - \lambda_{i+1} \geq 2$ if $c(\lambda_{i}) = ab$ or $c(\lambda_i) < c(\lambda_{i+1})$ in \eqref{colororder}, having $u$ parts $a$ or $ab$ and $v$ parts $b$ or $ab$.

\noindent
Then for all $u,v,n \in \N,$ $A(u,v,n) = B(u,v,n).$

\noindent
In other words,
$$\sum_{u,v,n \in \N} B(u,v,n) a^ub^vq^n = (-aq;q)_{\infty}(-bq;q)_{\infty}.$$
\end{theorem}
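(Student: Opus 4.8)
The generating function for $A$ is essentially free. A partition into $u$ distinct parts coloured $a$ becomes, after subtracting the staircase $u+(u-1)+\dots+1$ from the sorted parts, an arbitrary partition with at most $u$ parts, so by Euler's identity the $a$-coloured parts are enumerated by $\sum_{u\ge 0}a^uq^{\binom{u+1}{2}}/(q;q)_u=(-aq;q)_\infty$, and the $b$-coloured parts contribute $(-bq;q)_\infty$ independently. Hence $\sum_{u,v,n}A(u,v,n)a^ub^vq^n=(-aq;q)_\infty(-bq;q)_\infty$, and the theorem reduces to showing that $B$ has the same generating function. I would compute it directly.

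For $B$ I would use the minimal-partition device of Alladi and Gordon. Fix $s$ and a colour word $w=(c_1,\dots,c_s)\in\{a,b,ab\}^s$, read from the largest part down to the smallest; say $w$ has $i$ letters $ab$, $j$ letters $a$ and $k$ letters $b$, so $s=i+j+k$ and $w$ contributes $a^{i+j}b^{i+k}$. The minimal admissible gap between the $t$th and $(t+1)$st parts of a $B$-partition with colour word $w$ depends only on $(c_t,c_{t+1})$ — it is $2$ when $c_t=ab$ or $(c_t,c_{t+1})=(a,b)$, and $1$ otherwise — and the only further restriction is that the smallest part not be $1_{ab}$; hence there is a unique smallest partition with colour word $w$, of size $M(w)$, which is $\binom{s+1}{2}$ plus the sum of the positions (counted from the largest part) of the $ab$-letters and of the $(a,b)$-descents of $w$. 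A standard argument — subtract the minimal partition term by term — shows that every $B$-partition with colour word $w$ is that minimal one plus an ordinary partition with at most $s$ parts, and conversely, so these partitions are generated by $q^{M(w)}/(q;q)_s$. Summing over all words,
\[
\sum_{u,v,n}B(u,v,n)a^ub^vq^n\;=\;\sum_{s\ge 0}\frac{1}{(q;q)_s}\sum_{w\in\{a,b,ab\}^s}q^{M(w)}\,a^{\#_a(w)}b^{\#_b(w)},
\]
where $\#_a(w)$ (resp. $\#_b(w)$) counts the letters of $w$ in $\{a,ab\}$ (resp. $\{b,ab\}$).

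It then remains to evaluate this double sum. I would group the words by their profile $(i,j,k)$ and carry out the inner sum over arrangements: tracking $q^{M(w)}$ turns it into products of Gaussian binomial coefficients (for example, over binary words with $j$ letters $a$ and $k$ letters $b$, weighting by $q$ to the sum of the positions of the $(a,b)$-descents already yields $\binom{j+k}{j}_q$), so the profile-$(i,j,k)$ contribution reduces to $q^{f(i,j,k)}/\bigl((q;q)_i(q;q)_j(q;q)_k\bigr)$ for an explicit quadratic $f$; summing the resulting triple series over $i,j,k$ by classical $q$-series identities — Euler's $\sum_n z^nq^{\binom{n}{2}}/(q;q)_n=(-z;q)_\infty$ together with the $q$-Chu--Vandermonde summation to resolve the coupling between the $ab$-parts and the $a$- and $b$-parts — should collapse it to $(-aq;q)_\infty(-bq;q)_\infty$. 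The main obstacle is exactly this last passage: pinning down $M(w)$ correctly and then recognising the combinatorial sum over colour words as a multi-sum of $q$-binomials that classical formulas evaluate in closed form — this is the step that genuinely exploits how the three colours $a$, $b$, $ab$ fit together, i.e.\ the identification $c=ab$. As an alternative one could look for a weight-preserving bijection, in the spirit of Bressoud's proof of Schur's theorem, merging an $a$-part and a $b$-part lying too close into a single $ab$-part; but verifying that such a map and its inverse respect all the difference conditions is itself the crux, so I would rely on the generating-function route.
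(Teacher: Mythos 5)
Your proposal is correct in outline, but it is not the paper's proof: it is Alladi and Gordon's original 1993 argument, which Section 1.1 describes and which the paper then deliberately sets aside. The paper's own proof of Theorem~\ref{schurnondil} (sketched in Section 1.2) introduces $G_k(q;a,b)$, the generating function for the coloured partitions with difference conditions and largest part at most $k$, writes one recurrence per colour of the largest part together with initial conditions, proves by induction the functional equation $G_{(k+2)_{ab}}(q;a,b)=(1+aq)(1+bq)\,G_{k_b}(q;aq,bq)$, and lets $k\to\infty$ and iterates to reach $(-aq;q)_{\infty}(-bq;q)_{\infty}$ --- no minimal partitions and no $q$-series identities. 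Your minimal-partition route is set up correctly: the gap is $2$ exactly when $c_t=ab$ or $(c_t,c_{t+1})=(a,b)$, the exclusion of $1_{ab}$ is correctly absorbed by letting a trailing $ab$ contribute $s$ to $M(w)$, and the decomposition ``admissible partition $=$ minimal partition $+$ ordinary partition with at most $s$ parts'' is a genuine bijection since the difference conditions only involve consecutive parts. The trade-off between the two approaches is instructive: yours keeps the combinatorics transparent but funnels all the difficulty into the final evaluation of $\sum_w q^{M(w)}$ and the resulting triple $q$-series (Alladi--Gordon's ``key identity''), which you assert ``should collapse'' via Euler and $q$-Chu--Vandermonde rather than carry out --- and that evaluation is exactly the step the paper's new method is designed to avoid, because it does not scale to the later examples (Siladi\'c, Primc) where the minimal partitions and the matching $q$-series identities become intractable. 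The recurrence method replaces that computation with a guessable functional equation whose inductive verification is routine. So: a valid and historically standard strategy, genuinely different from the paper's, but to be a complete proof you would still need to pin down the exponent $f(i,j,k)$ explicitly and perform the closing $q$-series summation.
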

Here we used the $q$-series notation $(a;q)_{\infty} := \prod_{n \geq 0} (1-aq^n).$ Under the dilations \eqref{dilschur}, this gives a refinement of Schur's theorem keeping track of the number of parts whose colour involves $a$ and the number of parts whose colours involve $b$.
Such non-dilated theorems are very interesting, as using other dilations or a different ordering of the colours can lead to infinitely many new identities.

This method was successfully applied to other identities too, such as G\"ollnitz' or Capparelli's theorems~\cite{AllAndGor2,AllAndGor}. However, when there are too many colours or when the difference conditions are too complicated, it might be hard to apply this method directly, as the minimal partitions might be hard to compute (even in the above-mentioned papers, this required long computations involving $q$-binomial or $q$-multinomial coefficients). Moreover it might also be complicated to find helpful $q$-series identities when we are working with too many colour variables. Therefore, we introduce a new version of the method of weighted words, which doesn't use minimal partitions and $q$-series identities, but only recurrences and $q$-difference equations.

\subsection{The new version of the method}
Our method starts as the original one. We assign a colour to each residue class modulo the number considered in the partition identity. For example in the case of Schur's theorem, we will assign colour $a$ (resp. $b,c$) to the numbers congruent to $1$ (resp. $2,3$) modulo $3$. Then, before going at the non-dilated level, we consider partitions with difference conditions with those added parameters. By studying the first terms of their generating function, we find some necessary conditions on the colours to obtain an infinite product which is the generating function for partitions with congruence conditions. This can be done with a computer program when there are too many colours.
In the case of Schur's theorem, the series starts with $1+aq+bq^2+cq^3+aq^4+(a^2+b)q^5+\cdots .$ For this to be the beginning of the expansion of a suitable infinite product, we need $c=ab$.

Then, we keep these restrictions on the colours and translate the difference conditions at the non-dilated level as in Alladi and Gordon's method. Now we want to show that the generating function for these partitions is an infinite product, but avoid using minimal partitions. To do so, we define, for $q,a,b,...$ of module smaller than $1$, $G_k(q;a,b,...)$, the generating function for the coloured partitions with difference conditions with largest part $\leq k$ (where $k$ is a coloured integer), where the power of $q$ (resp. $a,b,...$) is the number partitioned (resp. the number of parts with colour $a,b,...$). Using the difference conditions, we give recurrences satisfied by those $G_k(q;a,b,...)$'s, and initial conditions chosen so that we obtain the correct first values of $G_k(q;a,b,...)$ with the recurrences.
For example, in the case of Schur's theorem, we obtain
\begin{align*}
G_{k_a}(q;a,b) &= G_{k_{ab}} (q;a,b) + aq^{k} G_{(k-1)_a}(q;a,b),\\
G_{k_b}(q;a,b) &= G_{k_a} (q;a,b) + bq^{k} G_{(k-1)_b} (q;a,b),\\
G_{k_{ab}}(q;a,b) &= G_{(k-1)_b} (q;a,b) + abq^{k} G_{(k-2)_b} (q;a,b),
\end{align*}
and the initial conditions
\begin{align*}
G_{0_a}(q;a,b)&=G_{0_b}(q;a,b)=G_{0_{ab}}(q;a,b)=1,\\
G_{-1_a}(q;a,b)&=G_{-1_b}(q;a,b)=G_{-1_{ab}}(q;a,b)=0.
\end{align*}

Finally, we use these recurrences to find $G_{\infty}(q;a,b,...):= \lim_{k \rightarrow \infty} G_{k}(q;a,b,...)$, which will be the generating function for all coloured partitions with difference conditions, as there is no more restriction on the largest part. This is the only non-automatic step, and the techniques to do it may vary. For example, in the case of Schur's theorem, we can prove that for all $k \in \N$,
$$G_{(k+2)_{ab}}(q;a,b)= (1+aq)(1+bq) G_{k_b}(q;aq,bq).$$
Therefore we have
$$G_{\infty}(q;a,b)= (1+aq)(1+bq) G_{\infty}(q;aq,bq),$$
and iterating leads to
$$G_{\infty}(q;a,b)= (-aq;q)_{\infty}(-bq;q)_{\infty}G_{\infty}(q;0,0) = (-aq;q)_{\infty}(-bq;q)_{\infty},$$
which is an infinite product as wanted. This completes the proof.

In the next section, we present three different applications of this new version of the method of weighted words.

\section{Applications}
\subsection{Siladi\'c's theorem}
The first example of application of our method is Siladi\'c's theorem~\cite{Siladic}, a partition identity which was proved in 2002 by studying level $1$ modules of the twisted affine Lie algebra $A^{(2)}_2.$

\begin{theorem}[Siladi\'c]
\label{refinement}
Let $n \in \N$. Let $A(n)$ denote the number of partitions of $n$ into distinct odd parts, and $B(n)$ denote the number of partitions $\lambda_1 + \cdots+ \lambda_s$ of $n$ satisfying the following conditions:
\begin{enumerate}
  \item $\forall i \geq 1, \lambda_i \neq 2$,
  \item $\forall i \geq 1, \lambda_i - \lambda_{i+1} \geq 5$,
  \item $\forall i \geq 1$,
  \begin{equation*}
  \begin{aligned}
&\lambda_i - \lambda_{i+1} = 5 \Rightarrow \lambda_i \equiv 1, 4 \mod 8,
\\&\lambda_i - \lambda_{i+1} = 6 \Rightarrow \lambda_i \equiv 1, 3, 5, 7 \mod 8,
\\&\lambda_i - \lambda_{i+1} = 7 \Rightarrow \lambda_i \equiv 0, 1, 3, 4, 6, 7 \mod 8,
\\&\lambda_i - \lambda_{i+1} = 8 \Rightarrow \lambda_i \equiv 0, 1, 3, 4, 5, 7 \mod 8.
	\end{aligned}
	\end{equation*}
\end{enumerate}
Then for all $n$, $A(n)=B(n)$.
\end{theorem}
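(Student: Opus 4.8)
The plan is to follow the new version of the method of weighted words presented above. Since the congruence conditions in Theorem~\ref{refinement} are modulo $8$ and the product side satisfies $\sum_{n} A(n) q^n = (-q;q^2)_\infty = \prod_{j \in \{1,3,5,7\}} (-q^j;q^8)_\infty$, the first step is to introduce one colour per residue class modulo $8$ and to work with partitions into coloured integers, under the dilation $q \to q^8$ which sends a coloured integer of colour~$i$ to a positive integer congruent to $i$ modulo $8$. Computing the first terms of the generating function for partitions satisfying conditions (1)--(3) carrying these colour parameters — a computation one can carry out by machine — one reads off the necessary relations among the colour variables: the colours attached to the even residue classes must become products of the colours attached to the odd ones, and the restriction $\lambda_i \neq 2$ becomes the exclusion of a single coloured part equal to~$1$. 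These relations are exactly what is needed for the series to have a chance of being the expansion of a product of the form $\prod_j (-a_j q;q)_\infty$ which, after dilation, collapses to $(-q;q^2)_\infty$.

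Keeping these colour relations, the second step is to translate conditions (1)--(3) to the non-dilated level exactly as Alladi and Gordon do: one obtains a linear order on the coloured integers together with, for each ordered pair of colours, a minimal allowed difference in $\{0,1\}$ between consecutive parts, plus the exclusion of one coloured~$1$. Then, for $q$ and the colour variables of modulus less than $1$, let $G_k := G_k(q;a,b,\dots)$ denote the generating function for these non-dilated coloured partitions with largest part at most $k$, where $q$ records the integer partitioned and each colour variable records the number of parts of that colour. The difference conditions translate into a finite system of recurrences expressing the $G_k$ attached to the various colours of an integer $k$ in terms of the $G$'s attached to $k$ and to smaller integers, together with the initial conditions $G_0 = 1$ and $G_{-1} = 0$ for every colour. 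As stressed in the Introduction, writing down this system is forced and essentially automatic.

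The last step, and the only genuinely non-automatic one, is to use these recurrences to compute $G_\infty := \lim_{k \to \infty} G_k$. Following the Schur model, the aim is to establish a functional equation $G_\infty(q;a,b,\dots) = P(q;a,b,\dots)\, G_\infty(q;aq,bq,\dots)$ for an explicit finite product $P$; iterating it and using $G_\infty(q;0,0,\dots) = 1$ then exhibits $G_\infty$ as an infinite product, and under the dilation $q \to q^8$, after specialising the colour variables, this product becomes $(-q;q^2)_\infty = \sum_n A(n) q^n$, which completes the proof — and in fact proves a refinement of Siladi\'c's theorem recording the colour statistics, since the colour variables are carried throughout the argument.

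\emph{The main obstacle} is precisely this last step. With eight colours and four distinct difference cases (differences $5,6,7,8$, the larger differences being unconstrained) the recurrence system is sizeable, so the work lies in finding the right auxiliary linear combinations of the $G_k$ — or an intermediate family of identities of the type $G_{(k+c)_{\text{colour}}}(q;a,b,\dots) = (\text{finite product})\, G_{k_{\text{colour}'}}(q;aq,bq,\dots)$, in the spirit of the identity $G_{(k+2)_{ab}}(q;a,b) = (1+aq)(1+bq) G_{k_b}(q;aq,bq)$ used for Schur's theorem — that telescope into the desired functional equation. Once the right combinations are guessed, checking them against the recurrences is routine, though lengthy.
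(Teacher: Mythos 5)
Your outline reproduces the general strategy of the method (one colour per residue class, necessary colour relations read off from the first terms of the series, recurrences for the generating functions $G_k$ with bounded largest part, and a functional equation for $G_\infty$), but it stops exactly where the actual proof begins. The content of the argument is the concrete data you leave unspecified: the first-terms computation forces not merely ``even colours become products of odd ones'' but a collapse of the four odd residue classes to two parameters --- $1,5 \bmod 8$ get colour $a$, $3,7 \bmod 8$ get colour $b$, $0,4 \bmod 8$ get $ab$, $2 \bmod 8$ gets $b^2$ and $6 \bmod 8$ gets $a^2$ --- with target product $(-aq;q^4)_\infty(-bq^3;q^4)_\infty$. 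Accordingly the paper works under the dilation $q \to q^4$, $a \to aq^{-3}$, $b \to bq^{-1}$ (not $q \to q^8$), with five colours, an interleaved order on coloured integers in which $a^2$ and $b^2$ occur only for odd integers, and an $8\times 8$ matrix of minimal differences whose entries range from $1$ to $4$ (not $\{0,1\}$ as you predict). None of this is recoverable from your text without redoing the work.

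More importantly, the ``only genuinely non-automatic step,'' which you explicitly defer (``once the right combinations are guessed\ldots''), is the proof. The paper establishes, by induction on $k$ using the eight recurrences, the four identities $G_{2k+1_{ab}}(q;a,b) = (1+aq)\, G_{2k_{a}}(q;b,aq)$, $G_{2k+1_{b^2}}(q;a,b) = (1+aq)\, G_{2k_{b}}(q;b,aq)$, $G_{2k+2_{ab}}(q;a,b) = (1+aq)\, G_{2k+1_{a}}(q;b,aq)$ and $G_{2k+1_{a^2}}(q;a,b) = (1+aq)\, G_{2k-1_{b}}(q;b,aq)$. These are \emph{not} of the shape $G_{(k+c)}(q;a,b,\dots) = P\cdot G_{k}(q;aq,bq,\dots)$ that you propose by analogy with Schur: the substitution is the asymmetric swap $(a,b)\mapsto(b,aq)$, and only after iterating twice at the level of $G_\infty$ does one obtain $G_\infty(q;a,b)=(1+aq)(1+bq)G_\infty(q;aq,bq)$ and hence $(-aq;q)_\infty(-bq;q)_\infty$, which dilates and specialises to $(-q;q^2)_\infty$. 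Finding and proving these swapped identities is the heart of the matter; as written, your proposal is a plan for a proof with its decisive step missing.
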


Siladi\'c's theorem is a good example of an identity where the classical method of weighted words would be difficult to apply. The difference conditions are quite intricate so it seems hard to find the minimal partitions and therefore the generating function for the partitions with difference conditions. Even if that was possible, it might also not be that easy to find a $q$-series identity (with up to $8$ colour variables) which would lead to the correct infinite product.

However, our new version of the method works quite well in that case.
First, we assign a different colour to each congruence class modulo $8$. By studying the first terms of the generating function for partitions with difference conditions, we notice that some relations between colours are necessary for this generating function to be an infinite product. The integers congruent to $1$ or $5$ modulo $8$ have colour $a$, those congruent to $3$ or $7$ modulo $8$ have colour $b$, those congruent to $0$ or $4$ modulo $8$ have colour $ab$, those congruent to $2$ modulo $8$ have colour $b^2$ and those congruent to $6$ modulo $8$ have colour $a^2$, where $a$ and $b$ are some free parameters. The infinite product we seem to obtain in that case is $(-aq;q^4)_{\infty} (-bq^3;q^4)_{\infty}$.

Now let us translate this at the non-dilated level.
We consider integers appearing in five colours $a$, $b$, $ab$, $a^2$ and $b^2$, ordered as follows:
$$1_{ab} < 1_a < 1_{b^2} <1_{b} <2_{ab} < 2_a <3_{a^2} < 2_{b} <3_{ab} < 3_a < 3_{b^2} <3_b < \cdots .$$
Note that the colours $a^2$ and $b^2$ only appear for odd integers, and that $1_{a^2}$ doesn't appear.
We consider partitions $\lambda_1 + \cdots + \lambda_s$ where the entry $(x,y)$ in the matrix $A$ gives the minimal difference between $\lambda_i$ of colour $x$ and $\lambda_{i+1}$ of colour $y$:
$$
A=\bordermatrix{\text{} & a & b & ab & a^2 & b^2 \cr a_{odd} & 2&2&1&2&2 \cr b^2 &2&3&2&2&4 \cr b_{odd} &1&2&1&2&2 \cr ab_{even} &2&2&2&3&3 \cr a_{even}&2&2&2&3&3 \cr a^2 &3&3&3&4&4 \cr b_{even} &1&2&1&1&3 \cr ab_{odd} &2&3&2&2&3}.
$$
We defined this order and matrix such that under the dilations
\begin{equation}
\label{dilat}
q \rightarrow q^4, a \rightarrow aq^{-3}, b \rightarrow bq^{-1},
\end{equation}
the order on the coloured integers becomes the natural ordering
$$0_{ab} < 1_a < 2_{b^2} <3_{b} <4_{ab} < 5_a <6_{a^2} < 7_{b} <8_{ab} < 9_a < 10_{b^2} <11_b < \cdots, $$
and the difference conditions become those of Siladi\'c's theorem.

We prove the following refinement and non-dilated version of Siladi\'c's theorem.
\begin{theorem}
\label{th:nondil}
For $u,v,n \in \N$, let $D(u,v,n)$ denote the number of partitions $\lambda_1 + \cdots + \lambda_s$ of $n$, where $1$ can only be of colour $a$, satisfying the difference conditions given by the matrix $A$, such that $u$ equals the number of parts $a$ or $ab$ plus twice the number of parts $a^2$ and $v$ equals the number of parts $b$ or $ab$ plus twice the number of parts $b^2$.

Then
\begin{equation*}
\sum_{u,v,n \in \N} D(u,v,n)a^ub^vq^n = (-aq;q)_{\infty} (-bq;q)_{\infty}.
\end{equation*}
\end{theorem}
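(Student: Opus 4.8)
The plan is to follow the new version of the method of weighted words outlined in the introduction, specialised to the five colours $a,b,ab,a^2,b^2$ and the matrix $A$. First I would introduce generating functions $G_k(q;a,b)$ for the coloured partitions satisfying the difference conditions of $A$ whose largest part is at most the coloured integer $k$ (with $1$ allowed only in colour $a$, and $1_{a^2}$ forbidden), where the exponent of $q$ records the integer partitioned and the exponents of $a,b$ record the statistics described in the theorem (a part $a^2$ contributing $a^2$, a part $ab$ contributing $ab$, etc.). Reading off each row of $A$ gives a system of recurrences expressing $G_k$ in terms of $G_{k-1}$, $G_{k-2}$, $\dots$ at the various colours: roughly, $G_{k_x}(q;a,b) = G_{k_{y}}(q;a,b) + (\text{colour weight})\, q^k\, G_{(k-d)_{z}}(q;a,b)$, where $y$ is the colour immediately preceding $k_x$ in the order, and the second term accounts for partitions whose largest part is exactly $k_x$, the index $(k-d)_z$ being the largest coloured integer allowed below $k_x$ according to the relevant column of $A$. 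I would fix the initial conditions at $k=0$ and $k=-1$ so that the recurrences reproduce the correct small cases (in particular $G_{0_x}=1$, $G_{-1_x}=0$, with the extra care that $1_{a^2}$ is excluded), exactly as in the Schur example.

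The heart of the proof is the passage to the limit $G_\infty(q;a,b) := \lim_{k\to\infty} G_k(q;a,b)$. The strategy, again mirroring the Schur case, is to establish a functional equation of the form
\begin{equation*}
G_\infty(q;a,b) = (1+aq)(1+bq)\, G_\infty(q;aq,bq),
\end{equation*}
which upon iteration and using $G_\infty(q;0,0)=1$ yields $(-aq;q)_\infty(-bq;q)_\infty$. To get there I would look for an identity relating one of the $G_{k}$'s at a well-chosen colour and shift (something like $G_{(k+c)_{ab}}(q;a,b)$ for an appropriate constant $c$, or a linear combination of the $G_{k}$ at several colours) to $G_{k'}(q;aq,bq)$ times the factor $(1+aq)(1+bq)$; this is proved by induction on $k$ using the system of recurrences, after which letting $k\to\infty$ removes the dependence on $k$. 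Since there are now two-step recurrences (entries $3$ and $4$ in $A$ force looking back two integers) and the colours $a^2,b^2$ only occur at odd integers, I expect to need separate bookkeeping for odd versus even $k$, and possibly to work with a vector of the five (or eight, after splitting by parity) functions $G_{k_a}, G_{k_b}, G_{k_{ab}}, G_{k_{a^2}}, G_{k_{b^2}}$ simultaneously, reducing the problem to a matrix recurrence whose stabilisation as $k\to\infty$ must be analysed.

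Before all of this I would also need to justify that the dilated statement really is Theorem~\ref{refinement}: under \eqref{dilat} the colour order becomes the natural order $0_{ab}<1_a<2_{b^2}<\cdots$, the entries of $A$ become the gaps $5,6,7,8$ with exactly the congruence restrictions listed in condition (3), the forbidden $1_{a^2}$ becomes the forbidden part $2$, and parts of colour $a,b,ab,a^2,b^2$ become parts congruent to $1,3,0,6,2 \bmod 8$ respectively — so that a part $a^2$ (resp. $b^2$) indeed contributes $2$ to the $a$-statistic (resp. $b$-statistic), matching the definition of $D(u,v,n)$, and setting $a=b=1$ together with the classical identity $(-q;q^4)_\infty(-q^3;q^4)_\infty \cdot (-q^2;q^4)_\infty(-q^4;q^4)_\infty$... more directly, $(-aq;q)_\infty(-bq;q)_\infty$ dilates to the generating function $(-q;q^2)_\infty$ for partitions into distinct odd parts, giving $A(n)=B(n)$.

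The main obstacle, as in every instance of this method, will be the final non-automatic step: guessing the correct auxiliary identity at the level of the $G_k$'s (the right colour, the right shift $k\mapsto k+c$, and the right linear combination) that collapses to the clean functional equation $G_\infty(q;a,b)=(1+aq)(1+bq)G_\infty(q;aq,bq)$. Everything else — writing down the recurrences from $A$, fixing initial conditions, checking the induction, and taking the limit — is mechanical once that identity is in hand. A secondary difficulty is handling the parity split cleanly, since the auxiliary colours $a^2,b^2$ break the uniformity of the recurrence between consecutive integers; I would expect the cleanest route to be to prove the key identity for $G_{(k)_{ab}}$ at even $k$ by a two-step induction that also carries along the needed information about the odd-index functions.
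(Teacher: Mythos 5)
Your overall plan is the paper's plan: define $G_k(q;a,b)$ as the generating function for the coloured partitions with largest part at most $k$, extract a system of recurrences from the rows of $A$ (the paper uses eight of them, e.g.\ $G_{2k+1_{ab}}(q;a,b)=G_{2k_{b}}(q;a,b)+abq^{2k+1}G_{2k-1_{a}}(q;a,b)$), and then pass to $G_\infty$ via a functional equation that iterates to $(-aq;q)_\infty(-bq;q)_\infty$. Your verification of the dilation back to Siladi\'c's theorem is also correct. So the framework is right, and you have correctly isolated where the real work lies.

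The gap is precisely at that isolated point, and your guess for the key identity has the wrong shape. You propose hunting for a \emph{symmetric} finite-level identity of the form $G_{(k+c)_{ab}}(q;a,b)=(1+aq)(1+bq)\,G_{k'}(q;aq,bq)$, in direct analogy with Schur. What the paper actually proves (by induction on $k$, using the eight recurrences) is a family of \emph{asymmetric} identities that pull out a single factor while swapping the two colour variables with a shift, namely
\begin{align*}
G_{2k+1_{ab}}(q;a,b) &= (1+aq)\, G_{2k_{a}}(q;b,aq), &
G_{2k+1_{b^2}}(q;a,b) &= (1+aq)\, G_{2k_{b}}(q;b,aq),\\
G_{2k+2_{ab}}(q;a,b) &= (1+aq)\, G_{2k+1_{a}}(q;b,aq), &
G_{2k+1_{a^2}}(q;a,b) &= (1+aq)\, G_{2k-1_{b}}(q;b,aq),
\end{align*}
so that $G_\infty(q;a,b)=(1+aq)\,G_\infty(q;b,aq)$, and only after \emph{two} iterations does one recover the symmetric equation $G_\infty(q;a,b)=(1+aq)(1+bq)\,G_\infty(q;aq,bq)$. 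This substitution $(a,b)\mapsto(b,aq)$ is not a cosmetic choice: it is what absorbs the parity asymmetry you worry about (the colours $a^2$ and $b^2$ occurring only at odd integers, and the shift by one unit in the exponent), and it is why the induction closes with four coupled identities rather than one. A search for the symmetric finite-level identity you describe is likely to stall, because the symmetric statement need not hold before the limit; the missing idea is to look for the one-factor, variable-swapping version instead. Everything else in your outline (recurrences, initial conditions, taking $k\to\infty$, dilation) matches the paper and is, as you say, mechanical once this identity is in hand.
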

\begin{proof}[Idea of the proof of Theorem~\ref{th:nondil}]
We define $G_k(q;a,b)$ to be the generating function for coloured partitions with difference conditions and largest part at most $k$.
Using combinatorial reasoning on the largest part and the difference conditions of matrix $A$, we start by giving eight recurrences for the $G_k (q;a,b)$'s, such as
$$G_{2k+1_{ab}}(q;a,b)=G_{2k_{b}}(q;a,b)+abq^{2k+1} G_{2k-1_{a}}(q;a,b).$$
Then we use them to prove the following equations by induction on $k \in \N^*$:
\begin{align*}
G_{2k+1_{ab}}(q;a,b) &= (1+aq) G_{2k_{a}}(q;b,aq),\\
G_{2k+1_{b^2}}(q;a,b) &= (1+aq) G_{2k_{b}}(q;b,aq),\\
G_{2k+2_{ab}}(q;a,b) &= (1+aq) G_{2k+1_{a}}(q;b,aq), \\
G_{2k+1_{a^2}}(q;a,b) &= (1+aq) G_{2k-1_{b}}(q;b,aq).
\end{align*}
Finally, letting $k$ tend to infinity and iterating leads to
\begin{align*}
G_{\infty}(q;a,b) &= (1+aq) G_{\infty} (q;b,aq)\\
&= (1+aq)(1+bq) G_{\infty} (q;aq,bq)\\
&= (-aq;q)_{\infty}(-bq;q)_{\infty}.
\end{align*}
This is the generating function for partitions into distinct parts coloured $a$ or $b$.
\end{proof}

By doing the dilations~\eqref{dilat}, we obtain the following new refinement of Siladi\'c's theorem.
\begin{theorem}
\label{th:refdilat}
For $u,v,n \in \N$, let $C_4(u,v,n)$ denote the number of partitions of $n$ into $u$ distinct parts congruent to $1$ modulo $4$ and $v$ distinct parts congruent to $3$ modulo $4$.
Let $D_4(u,v,n)$ denote the number of partitions $\lambda_1 + \cdots + \lambda_s$ of $n$ such that $u$ equals the number of parts congruent to $0$ or $1$ modulo $4$ plus twice the number of parts congruent to $6$ modulo $8$ and $v$ equals the number of parts congruent to $0$ or $3$ modulo $4$ plus twice the number of parts congruent to $2$ modulo $8$, satisfying the difference conditions.
Then $C_4(u,v,n)=D_4(u,v,n)$.
\end{theorem}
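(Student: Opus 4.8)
The plan is to obtain Theorem~\ref{th:refdilat} simply by performing the dilations~\eqref{dilat}, i.e.\ the substitution $q\to q^{4}$, $a\to aq^{-3}$, $b\to bq^{-1}$, in the non-dilated identity of Theorem~\ref{th:nondil},
\[
\sum_{u,v,n\in\N}D(u,v,n)\,a^{u}b^{v}q^{n}=(-aq;q)_{\infty}(-bq;q)_{\infty},
\]
and identifying what each side becomes. On the product side the computation is immediate: $(-aq;q)_{\infty}=\prod_{j\ge 0}(1+aq^{j+1})$ turns into $\prod_{j\ge 0}(1+aq^{-3}q^{4(j+1)})=\prod_{j\ge 0}(1+aq^{4j+1})=(-aq;q^{4})_{\infty}$, and similarly $(-bq;q)_{\infty}$ becomes $(-bq^{3};q^{4})_{\infty}$. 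As $(-aq;q^{4})_{\infty}$ generates partitions into distinct parts congruent to $1$ modulo $4$, each carrying a factor $a$, and $(-bq^{3};q^{4})_{\infty}$ does the same for parts congruent to $3$ modulo $4$ carrying a factor $b$, the right-hand side becomes $\sum_{u,v,n}C_{4}(u,v,n)a^{u}b^{v}q^{n}$.

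For the left-hand side I would first record the effect of~\eqref{dilat} on one coloured part. A part $k$ of colour $a,b,ab,a^{2},b^{2}$ contributes the monomial $aq^{k},bq^{k},abq^{k},a^{2}q^{k},b^{2}q^{k}$, and under~\eqref{dilat} these become $aq^{4k-3},bq^{4k-1},abq^{4k-4},a^{2}q^{4k-6},b^{2}q^{4k-2}$; thus, reading the exponent of $q$ as an ordinary part, $k_{a}\mapsto 4k-3$, $k_{b}\mapsto 4k-1$, $k_{ab}\mapsto 4k-4$, $k_{a^{2}}\mapsto 4k-6$, $k_{b^{2}}\mapsto 4k-2$. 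The crucial structural fact is that this is an order-preserving bijection from the coloured integers occurring in the partitions counted by $D$ onto a set of nonnegative integers: the ordering chosen for the coloured integers, together with the shifts $-3,-1$ in~\eqref{dilat}, was designed exactly so that the order becomes the natural one $0_{ab}<1_{a}<2_{b^{2}}<3_{b}<\cdots$, and conversely the colour of an ordinary part is recovered from its residue ($1\mod 4\Rightarrow a$, $3\mod 4\Rightarrow b$, $0\mod 4\Rightarrow ab$, $2\mod 8\Rightarrow b^{2}$, $6\mod 8\Rightarrow a^{2}$), the last two being automatically consistent with $a^{2},b^{2}$ occurring only on odd coloured integers. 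The condition that the coloured value $1$ may only have colour $a$ becomes the condition on the smallest parts in the statement, $1_{ab}\mapsto 0$ being excluded in particular. Summing part contributions shows that a coloured partition of $n$ counted by $D(u,v,n)$ maps to an ordinary partition of $4n-3u-v$ — here $3u+v$ is precisely the total of the colour shifts — with unchanged exponents $u,v$, which for the image partition read $\#\{\text{parts}\equiv 0,1\mod 4\}+2\#\{\text{parts}\equiv 6\mod 8\}$ and $\#\{\text{parts}\equiv 0,3\mod 4\}+2\#\{\text{parts}\equiv 2\mod 8\}$, i.e.\ the statistics defining $D_{4}$. Finally I would verify, going through the matrix $A$ row by row, that a coloured difference $\lambda_{i}-\lambda_{i+1}$ becomes $4(\lambda_{i}-\lambda_{i+1})$ shifted by the difference of the two colour shifts, so that the lower bounds prescribed by $A$ turn into exactly the difference conditions imposed on the partitions counted by $D_{4}$. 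Hence the left-hand side becomes $\sum_{u,v,n}D_{4}(u,v,n)a^{u}b^{v}q^{n}$, and comparing coefficients of $a^{u}b^{v}q^{n}$ gives $C_{4}(u,v,n)=D_{4}(u,v,n)$.

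The argument is therefore essentially a change of variables; the only step that is not purely mechanical is the last verification, that the ordering of the coloured integers together with the matrix $A$ yields, under~\eqref{dilat}, precisely the advertised difference conditions with no collisions and no parts falling out of range, and within that the bookkeeping around the smallest parts, where the single coloured value $1$ spreads over the ordinary parts $0,1,2,3$. Since the ordering and $A$ were set up with~\eqref{dilat} in view, this is routine, but it is where the care lies.
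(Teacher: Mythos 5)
Your proposal is correct and matches the paper's route exactly: the paper obtains Theorem~\ref{th:refdilat} precisely by applying the dilations~\eqref{dilat} to Theorem~\ref{th:nondil}, having set up the colour ordering and the matrix $A$ in advance so that the coloured integers map to $0_{ab}<1_a<2_{b^2}<3_b<\cdots$ and the difference conditions become those of Siladi\'c's theorem. Your elaboration of the change of variables (the part map $k_a\mapsto 4k-3$, etc., the weight count $4n-3u-v$, and the exclusion of the part $0$ coming from $1_{ab}$) is exactly the bookkeeping the paper leaves implicit.
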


Moreover, the non-dilated Theorem~\ref{th:nondil} allows one to obtain infinitely many new identities by doing different dilations. In particular, the infinite product in Theorem~\ref{th:nondil} is exactly the same as in Alladi and Gordon's non-dilated version of Schur's theorem (Theorem~\ref{schurnondil}). Thus the same dilations as theirs leads to a new companion of Schur's theorem.
\begin{theorem}
\label{newschur}
For $u,v,n \in \N$, let $C_3(u,v,n)$ denote the number of partitions of $n$ into $u$ distinct parts congruent to $1$ modulo $3$ and $v$ distinct parts congruent to $2$ modulo $3$.
Let $D_3(u,v,n)$ denote the number of partitions $\lambda_1 + \cdots + \lambda_s$ of $n$ in two colours, say ordinary and primed, such that only parts congruent to $\pm 1 \mod 6$ can be primed, $1'$ is not a part, and such that $u$ equals the number of ordinary parts congruent to $0$ or $1$ modulo $3$ plus twice the number of primed parts congruent to $5$ modulo $6$ and $v$ equals the number of ordinary parts congruent to $0$ or $2$ modulo $3$ plus twice the number of primed parts congruent to $1$ modulo $6$, satisfying the following conditions:
$$\lambda_i - \lambda_{i+1}  \geq
\begin{cases}
4 + \chi(\lambda_{i+1}') \text{ if } \lambda_{i} \equiv 1, 2, 3, 5 \mod 6 \text{ and is not primed} , \\
5 + \chi(\lambda_{i+1}') \text{ if } \lambda_{i} \equiv 0, 4 \mod 6, \\
6 + \chi(\lambda_{i+1}') \text{ if } \lambda_{i} \equiv 1, 5 \mod 6 \text{ and is primed},
\end{cases}$$
where $$\chi(\lambda_{i+1}') = 
\begin{cases}
= 1 \text{ if $\lambda_{i+1}$ is primed} , \\
= 0 \text{ otherwise}.
\end{cases}$$
Then $C_3(u,v,n)=D_3(u,v,n)$.
\end{theorem}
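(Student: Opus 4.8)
The plan is to obtain Theorem~\ref{newschur} from the non-dilated Theorem~\ref{th:nondil} by performing the dilations \eqref{dilschur} — equivalently $q\mapsto q^{3}$, $a\mapsto aq^{-2}$, $b\mapsto bq^{-1}$, so that $ab\mapsto abq^{-3}$, $a^{2}\mapsto a^{2}q^{-4}$, $b^{2}\mapsto b^{2}q^{-2}$ — exactly as Alladi and Gordon did for Schur's theorem. On the product side this gives at once
\[
(-aq;q)_{\infty}(-bq;q)_{\infty}\ \longmapsto\ (-aq;q^{3})_{\infty}(-bq^{2};q^{3})_{\infty}=\sum_{u,v,n\in\N}C_{3}(u,v,n)\,a^{u}b^{v}q^{n},
\]
so it remains to describe the effect of the dilation on the partitions counted by $D(u,v,n)$.

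A part of colour $a$ (resp.\ $b$, $ab$) and size $k$ becomes an \emph{ordinary} part of size $3k-2$ (resp.\ $3k-1$, $3k-3$), of residue $1$ (resp.\ $2$, $0$) modulo $3$; a part of colour $a^{2}$ (resp.\ $b^{2}$), which occurs only for odd size $k=2m+1$, becomes a part of size $6m-1\equiv5\pmod6$ (resp.\ $6m+1\equiv1\pmod6$), which I will call \emph{primed}. Hence primed parts live exactly in the classes $\pm1\pmod6$, the part of size $1$ is necessarily ordinary (so $1'$ does not occur, and no part $0$ occurs), and the weighting of Theorem~\ref{th:nondil} ($u=$ parts $a$ or $ab$ plus twice the parts $a^{2}$, symmetrically for $v$) becomes verbatim the count of $u$ and $v$ in the statement. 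The point requiring care is that the dilation is not injective: $k_{a}$ and $k_{b^{2}}$ both map to $3k-2$, and $(2m)_{b}$ and $(2m+1)_{a^{2}}$ both map to $6m-1$. So at the dilated level one ordinary and one primed part of the same size may coexist, while (as one checks) ordinary parts remain automatically pairwise distinct and likewise the primed parts; this is precisely the ``ordinary/primed two-coloured partition'' of the statement.

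Next I would translate the difference conditions. For consecutive parts coming from $\lambda_{i}$ of colour $x$, size $k$ and $\lambda_{i+1}$ of colour $y$, size $l$, the dilated difference is $3(k-l)+(c_{y}-c_{x})$ with $(c_{a},c_{b},c_{ab},c_{a^{2}},c_{b^{2}})=(2,1,3,4,2)$, and the source condition is that $k-l$ be at least the entry of $A$ in the row indexed by $(x,\ \text{parity of }k)$ and the column indexed by $y$, together with $k_{x}$ lying above $l_{y}$ in the coloured order. Running through the eight rows of $A$ — which are exactly the eight cases ``$\lambda_{i}\equiv r\pmod6$, primed or not'' of the statement, namely $r\in\{1,2,3,5\}$ unprimed, $r\in\{0,4\}$, and $r\in\{1,5\}$ primed — and the admissible next colours, one verifies that $\lambda_{i}-\lambda_{i+1}\ge g(\lambda_{i})+\chi(\lambda_{i+1}')$, with $g$ equal respectively to $4$, $5$, $6$, together with the residue of $\lambda_{i}-\lambda_{i+1}$ modulo $3$ (forced by $c_{y}-c_{x}$) and the parities of $k$ and $l$, is equivalent to the matrix inequality. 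The subtle point is that $g(\lambda_{i})+\chi$ looks weaker than the true per-case minimal gaps, but is not: because $k-l$ is an integer and the residue of the gap is forced, the bound collapses back onto the matrix value, and the $+\chi$ is exactly the adjustment that makes the parity-dependent entries of $A$ reduce to the single uniform correction.

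The main obstacle is this last verification together with the associated bookkeeping: pushing the coloured order and the ``distinctness'' condition cleanly through the non-injective dilation, and confirming that all parity dependence in $A$ is absorbed into the three gap values $4,5,6$ and the single indicator $\chi$. Checking the small-part boundary cases (no $0$, no $1'$, the part $1$ allowed as an ordinary part) is immediate, and the product side and the $u,v$-count require nothing further.
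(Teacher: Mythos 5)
Your proposal follows exactly the paper's (unwritten) argument: Theorem~\ref{newschur} is obtained from Theorem~\ref{th:nondil} purely by applying the Alladi--Gordon dilations \eqref{dilschur}, and your translation of the colours, the weights $u,v$, the product side, and the matrix $A$ into the three gap values $4,5,6$ plus the indicator $\chi$ is correct (I checked all forty entries; the congruence of the gap modulo $3$, together with the parity forced when both parts have a squared colour, does collapse the uniform bound back onto the matrix entries, as you claim).

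One boundary point deserves more care than your ``immediate'': you take the survey's condition ``$1$ can only be of colour $a$'' at face value and conclude that $0$ and $1'$ are excluded. But under \eqref{dilschur} that literal condition would also exclude $1_b\mapsto 2$, whereas Theorem~\ref{newschur} allows $2$ as a part; moreover excluding $1_b$ is incompatible with the product $(-aq;q)_\infty(-bq;q)_\infty$, whose coefficient of $q$ is $a+b$. The condition in the detailed reference is that only $1_{ab}$ and $1_{b^2}$ are forbidden (these dilate to $0$ and $2$ in Siladi\'c's setting, and to $0$ and $1'$ here), which is what makes your boundary analysis, and the statement of Theorem~\ref{newschur}, come out right. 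With that correction your argument is complete and coincides with the paper's.
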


Detailed proofs and more applications can be found in \cite{Doussesil2}.

\subsection{Primc's conjecture}
Our method also applies to prove another result coming from representation theory. In~\cite{Primc}, Primc studied partition identities arising from crystal base theory. In particular, he considered partitions $\lambda_1 + \cdots + \lambda_s$ into integers in four colours $a,b,c,d$, with the order
\begin{equation}
\label{orderprimc}
1_{a} < 1_{b} < 1_{c} <1_{d} <2_{a} < 2_{b} <2_{c} < 2_{d} < \cdots ,
\end{equation}
such that the entry $(x,y)$ in the matrix $B$ gives the minimal difference between $\lambda_i$ of colour $x$ and $\lambda_{i+1}$ of colour $y$:
$$
B=\bordermatrix{\text{} & a & b & c & d \cr a & 2&1&2&2 \cr b &1&0&1&1 \cr c &0&1&0&2 \cr d&0&1&0&2}.
$$
Primc conjectured that, under the dilations
$$k_{a} \rightarrow 2k-1, k_{b} \rightarrow 2k, k_{c} \rightarrow 2k, k_{d} \rightarrow 2k+1,$$
the generating function for these coloured partitions is equal to $\frac{1}{(q;q)_{\infty}}.$

We can use our method to prove Primc's conjecture, and actually refine it as a non-dilated partition identity where we keep track of the parts coloured $a$, $c$ and $d$ (the usual test with the first values of the generating function shows that we should set $b=1$ to obtain a suitable infinite product).

\begin{theorem}
\label{th:main}
Let $A(n;k,\ell,m)$ denote the number of partitions defined above with $k$ parts coloured $a$, $\ell$ parts coloured $c$ and $m$ parts coloured $d$. Then
$$\sum_{n,k,\ell,m \geq 0} A(n;k,\ell,m) q^n a^k c^{\ell} d^m = \frac{(-aq;q^2)_{\infty}(-dq;q^2)_{\infty}}{(q;q)_{\infty}(cq;q^2)_{\infty}}.$$
\end{theorem}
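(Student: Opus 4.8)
The plan is to work at the non-dilated level with the colours $a,b,c,d$, the order \eqref{orderprimc} and the difference matrix $B$, setting $b=1$ as dictated by the test on the first coefficients of the generating function. For a coloured integer $k_x$, let $G_{k_x}=G_{k_x}(q;a,c,d)$ denote the generating function for the coloured partitions in question whose largest part is at most $k_x$, the exponent of $q$ recording the integer partitioned and the exponents of $a$, $c$ and $d$ the numbers of parts of colours $a$, $c$ and $d$; the conventions will be $G_{0_x}=1$ for every colour $x$ and $G_{j_x}=0$ for $j<0$. Then $G_\infty:=\lim_{k\to\infty}G_k$ is exactly the left-hand side of the statement, and applying the dilations $k_a\to 2k-1$, $k_b\to 2k$, $k_c\to 2k$, $k_d\to 2k+1$ would afterwards give a refinement of Primc's conjecture, so it suffices to prove that $G_\infty$ equals the product on the right-hand side.

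First I would derive a system of $q$-difference equations for the $G_{k_x}$ by peeling off the parts of largest value and reading off the rows of $B$. The subtlety, compared with the cases of Schur's or Siladi\'c's theorems, is that the vanishing entries $B(b,b)=B(c,c)=B(c,a)=B(d,a)=B(d,c)=0$ allow repeated parts: at a fixed value $k$ one may have either any number of parts $k_b$ in isolation, or a ``cluster'' of shape (at most one $k_d$)(any number of $k_c$)(at most one $k_a$). Summing these repetitions by geometric series in $q^k$ and in $cq^k$, one obtains recurrences linking the $G_{k_x}$ at a few consecutive indices across the four colours, with coefficients built from $(1-q^k)^{-1}$, $(1-cq^k)^{-1}$ and the linear terms $aq^k$, $dq^k$. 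The initial conditions above are exactly what is needed for these recurrences to reproduce the correct small values.

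The only non-automatic step is then to extract $G_\infty$. Guided by the target product and by the Siladi\'c case, where the recurrences are collapsed by a package of identities such as $G_{2k+1_{ab}}(q;a,b)=(1+aq)G_{2k_a}(q;b,aq)$, I would look for, and prove by induction on $k$, a package of identities expressing the $G_{k_x}(q;a,c,d)$ for the four colours as explicit linear factors times $G$'s of smaller index in the transformed variables $(aq^2,cq^2,dq^2)$. Letting $k\to\infty$, these should collapse to the functional equation
\begin{equation*}
G_\infty(q;a,c,d)=\frac{(1+aq)(1+dq)}{1-cq}\,G_\infty(q;aq^2,cq^2,dq^2).
\end{equation*}
Iterating it gives $G_\infty(q;a,c,d)=\frac{(-aq;q^2)_\infty(-dq;q^2)_\infty}{(cq;q^2)_\infty}\,G_\infty(q;0,0,0)$, and when $a=c=d=0$ the surviving partitions are precisely the unrestricted partitions into parts of colour $b$ (unrestricted because $B(b,b)=0$ and $b=1$), so $G_\infty(q;0,0,0)=1/(q;q)_\infty$. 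This yields the announced product.

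The hard part will be this last step. Because of the repeated parts the recurrences do not correspond one-to-one with the four colours in a simple way, so the right package of identities to induct on is not obvious: one has to guess the correct linear prefactors, track how the geometric-series factors $(1-q^k)^{-1}$ and $(1-cq^k)^{-1}$ transform under $a\mapsto aq^2$, $c\mapsto cq^2$, $d\mapsto dq^2$, and then verify the induction step by matching, colour by colour, the recurrence for $G_{k_x}(q;a,c,d)$ with the corresponding recurrence in the transformed variables. By contrast, setting up the $G_{k_x}$, deriving the recurrences, passing to the limit and iterating the functional equation are all routine.
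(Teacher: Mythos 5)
Your setup matches the paper's framework up to the decisive step: the $G_{k_x}$ with a bound on the largest part, the recurrences obtained by peeling off the block of largest parts, the correct description of which repetitions the zero entries of $B$ permit, the passage to $G_\infty$, and the observation that $G_\infty(q;0,0,0)=1/(q;q)_\infty$ are all sound, and the functional equation you write for $G_\infty$ is indeed consistent with the claimed product. But the step that carries all the mathematical content --- the ``package of identities'', proved by induction on $k$, expressing each $G_{k_x}(q;a,c,d)$ as explicit linear factors times a $G$ of smaller index in the variables $(aq^2,cq^2,dq^2)$ --- is not exhibited, only postulated by analogy with the Schur and Siladi\'c cases. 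That analogy is precisely where the argument stops being a proof: in those cases the $G_k$ are polynomials (the products are of ``distinct parts'' type), whereas here the zero entries of $B$ make the $G_{k_x}$ rational functions with denominators of the type $(q;q)_k$ and $(cq;q^2)_j$, and the substitution $(a,c,d)\mapsto(aq^2,cq^2,dq^2)$ does not shift the $(q;q)_k$ factors at all, so there is no reason a clean finite-$k$ factorisation of the proposed shape should exist. You give no candidate identities and no evidence that the induction closes.

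The paper diverges exactly at this point: it does not telescope a product, but instead eliminates the other colours to obtain the single third-order recurrence \eqref{eq:qdiff} satisfied by the $G_{k_d}$ alone, then lowers the order by passing back and forth between recurrences and $q$-difference equations, and finally extracts the limit with Appell's lemma. That the authors --- who introduced the telescoping device and used it successfully for Schur and Siladi\'c --- resort to this heavier machinery here is strong circumstantial evidence that your ``package'' is not there to be found in the form you describe. Until you either produce and verify such finite-$k$ identities or give some other derivation of the functional equation $G_\infty(q;a,c,d)=\frac{(1+aq)(1+dq)}{1-cq}\,G_\infty(q;aq^2,cq^2,dq^2)$, what you have is a correct reduction plus an unproved conjecture, with the gap sitting exactly on the non-automatic step.
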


Under the dilations
$$q \rightarrow q^2, a \rightarrow aq^{-1}, c \rightarrow c, d \rightarrow dq,$$
the ordering of integers~\eqref{orderprimc} becomes
$$1_a < 2_b <2_c < 3_d <3_a <4_b <4_c<5_d < \cdots ,$$
the matrix $B$ becomes
$$
B_2=\bordermatrix{\text{} & a & b & c & d \cr a & 4&1&3&2 \cr b &3&0&2&1 \cr c &1&2&0&3 \cr d&2&3&1&4},
$$
and this gives the following refinement of Primc's conjecture.

\begin{theorem}
\label{th:primcrefined}
Let $A_2(n;k,\ell,m)$ denote the number of coloured partitions $\lambda_1 + \cdots + \lambda_s$ of $n$, such that odd parts can be coloured $a$ or $d$ and even parts can be coloured $b$ or $c$, with no part $1_d$, such that $\lambda_i - \lambda_{i+1} \geq B_2(c(\lambda_i),c(\lambda_{i+1}))$, having $k$ parts coloured $a$, $\ell$ parts coloured $c$ and $m$ parts coloured $d$. Then
$$\sum_{n,k,\ell,m \geq 0} A_2(n;k,\ell,m) q^n a^k c^{\ell} d^m = \frac{(-aq;q^4)_{\infty}(-dq^3;q^4)_{\infty}}{(q^2;q^2)_{\infty}(cq^2;q^4)_{\infty}}.$$
\end{theorem}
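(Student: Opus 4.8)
The plan is to obtain Theorem~\ref{th:primcrefined} directly from Theorem~\ref{th:main} by carrying out the substitution $q \to q^2$, $a \to aq^{-1}$, $c \to c$, $d \to dq$ announced just before the statement. All that has to be checked is that this substitution is a genuine dilation in the sense of the method of weighted words: that it induces a weight-preserving bijection between the coloured partitions counted by $A(n;k,\ell,m)$ and those counted by $A_2(n';k,\ell,m)$, the total $n$ being replaced by $n' = 2n - k + m$ while the colour statistics $k,\ell,m$ are untouched.

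First I would record the action on a single part. Since $j_a$, $j_b$, $j_c$, $j_d$ contribute $aq^j$, $q^j$, $cq^j$, $dq^j$ respectively, the substitution sends these to $aq^{2j-1}$, $q^{2j}$, $cq^{2j}$, $dq^{2j+1}$, i.e. it realises $j_a \mapsto 2j-1$, $j_b \mapsto 2j$, $j_c \mapsto 2j$, $j_d \mapsto 2j+1$ on coloured integers. This is an order-preserving injection of the chain~\eqref{orderprimc} onto the chain $1_a < 2_b < 2_c < 3_d < 3_a < \cdots$, so odd values carry colours in $\{a,d\}$, even values carry colours in $\{b,c\}$, and the smallest $d$-part is $3_d$ --- which is precisely the ``no part $1_d$'' condition. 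Next I would match the difference conditions: setting $o(a)=-1$, $o(b)=o(c)=0$, $o(d)=+1$, a pair $\lambda_i = j_x$, $\lambda_{i+1} = j'_y$ is sent to a pair of dilated parts with difference $2(j-j') + o(x) - o(y)$, and a check of the sixteen entries shows $B_2(x,y) = 2B(x,y) + o(x) - o(y)$. Hence the dilated difference is $\geq B_2(x,y)$ exactly when $2(j-j') \geq 2B(x,y)$, i.e. (since $j,j'$ are integers) exactly when $j - j' \geq B(x,y)$. The inverse map is well defined on every $A_2$-partition, so this is the desired bijection, and applying the substitution to the identity of Theorem~\ref{th:main} --- using $(-aq;q^2)_\infty \mapsto (-aq;q^4)_\infty$, $(-dq;q^2)_\infty \mapsto (-dq^3;q^4)_\infty$, $(q;q)_\infty \mapsto (q^2;q^2)_\infty$, $(cq;q^2)_\infty \mapsto (cq^2;q^4)_\infty$ --- produces exactly the claimed infinite product.

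The genuine mathematical work sits one level down, in Theorem~\ref{th:main}, and is carried out with the new variant of the method: one sets $b=1$ as forced by the first terms of the generating function, translates the rows of the matrix $B$ into recurrences for the generating functions $G_k$ of the coloured partitions with largest part $\leq k$, solves those recurrences --- the one non-automatic step --- to identify $G_\infty$, and recognises it as $\frac{(-aq;q^2)_\infty(-dq;q^2)_\infty}{(q;q)_\infty(cq;q^2)_\infty}$. Granting Theorem~\ref{th:main}, the only thing that could derail Theorem~\ref{th:primcrefined} is an error in the sixteen-case comparison $B \leftrightarrow B_2$ or an off-by-one in the dilated value $n' = 2n-k+m$; I expect that to be the ``main obstacle'', and a purely clerical one. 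If one instead wanted to prove Theorem~\ref{th:primcrefined} from scratch, the hard part would again be solving the $G_k$-recurrences attached to $B_2$ directly.
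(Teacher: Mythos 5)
Your proposal is correct and follows exactly the paper's route: Theorem~\ref{th:primcrefined} is obtained from Theorem~\ref{th:main} purely by the dilation $q \to q^2$, $a \to aq^{-1}$, $c \to c$, $d \to dq$, and the paper leaves the verification that this dilation transports the order, the exclusion of $1_d$, and the matrix $B$ into $B_2$ implicit. Your check $B_2(x,y) = 2B(x,y) + o(x) - o(y)$ with $o(a)=-1$, $o(b)=o(c)=0$, $o(d)=1$ is exactly the right (and correct) way to make that verification precise.
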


One recovers Primc's conjecture by setting $a=c=d=1$, as the infinite product becomes
\begin{align*}
\frac{(-q;q^4)_{\infty}(-q^3;q^4)_{\infty}}{(q^2;q^2)_{\infty}(q^2;q^4)_{\infty}} &= \frac{(-q;q^2)_{\infty}(q;q^2)_{\infty}}{(q^2;q^2)_{\infty}(q^2;q^4)_{\infty}(q;q^2)_{\infty}}\\
&=\frac{(q^2;q^4)_{\infty}}{(q;q)_{\infty}(q^2;q^4)_{\infty}}\\
&=\frac{1}{(q;q)_{\infty}}.
\end{align*}

\begin{proof}[Idea of the proof of Theorem~\ref{th:main}]
Define $G_k (q;a,c,d)$ (resp. $E_k (q;a,c,d)$) to be the generating function for coloured partitions satisfying the difference conditions from matrix $A$ with the added condition that the largest part is at most (resp. equal to) $k$.
As usual, we want to find $G_{\infty}(q;a,c,d):= \lim_{k \rightarrow \infty} G_k (q;a,c,d)$, which is the generating function for all partitions with difference conditions, as there is no more restriction on the size of the largest part.

We start by using the matrix $B$ to give four recurrences relating the $G_k (q;a,c,d)$'s and the $E_k (q;a,c,d)$'s. For example, we prove
\begin{align*}
G_{k_{a}}(q;a,c,d)-G_{(k-1)_{d}}(q;a,c,d) &=E_{k_{a}}(q;a,c,d)
\\&=aq^k (E_{(k-1)_{b}}(q;a,c,d)+G_{(k-2)_{d}}(q;a,c,d)).
\end{align*}

Then we combine these four equations to obtain a larger recurrence equation involving only $G_{k_d}(q;a,c,d)$'s :
\begin{equation}
\label{eq:qdiff}
\begin{aligned}
(1-cq^k)&G_{k_{d}}(q;a,c,d)= \frac{1-cq^{2k}}{1-q^k}G_{(k-1)_{d}}(q;a,c,d) 
\\&+ \frac{aq^k+dq^k+adq^{2k}}{1-q^{k-1}}G_{(k-2)_{d}}(q;a,c,d) +\frac{adq^{2k-1}}{1-q^{k-2}}G_{(k-3)_{d}}(q;a,c,d).
\end{aligned}
\end{equation}

Finally we use the technique consisting of going back and forth from recurrences to $q$-difference equations previously introduced by the author~\cite{Doussegene,Doussegene2} to lower the degree of the equations, and conclude by using Appell's lemma to obtain $G_{\infty}(q;a,c,d)$.
\end{proof}
The details of the proof and some applications of Theorem~\ref{th:main} can be found in a paper with Jeremy Lovejoy~\cite{DousseLovejoy}.

\subsection{Andrews' theorems for overpartitions}
The last example of application of our method is a general theorem on coloured overpartitions.

Let $r$ be a positive integer. We define $r$ primary colours $u_1, \dots, u_r$ and use them to define $2^r-1$ colours $\tilde{u}_1, \dots , \tilde{u}_{2^r-1}$ as follows:
$$ \tilde{u}_i := u_1^{\epsilon_1(i)} \cdots u_r^{\epsilon_r(i)},$$
where
$$\epsilon_k(i) := \begin{cases}
1 \text{ if } 2^{k-1} \text{ appears in the binary expansion of } i \\
0 \text{ otherwise.}
\end{cases}$$
They are ordered in the natural ordering, namely
$$\tilde{u}_1 < \cdots < \tilde{u}_{2^r-1}.$$
Now for all $i \in \{1, \dots, 2^r-1 \},$ let $v(\tilde{u}_i)$ (resp. $z(\tilde{u}_i)$) be the smallest (resp. largest) primary colour appearing in the colour $\tilde{u}_i$ and $w(\tilde{u}_i)$ be the number of primary colours appearing in $\tilde{u}_i$. Finally, for $i,j \in \{1, \dots, 2^r-1 \},$ let 
$$\delta(\tilde{u}_i, \tilde{u}_j) :=  \begin{cases}
1 \text{ if } z(\tilde{u}_i) < v(\tilde{u}_j)\\
0 \text{ otherwise.}
\end{cases}$$

We prove the following theorem.
\begin{theorem}
\label{th:andrewsnondil}
Let $\overline{A}(\ell_1, \dots, \ell_r;k,n)$ denote the number of overpartitions of $n$ into non-negative parts coloured $u_1, \dots, u_{r-1}$ or $u_r$, having $\ell_i$ parts coloured $u_i$ for all $i \in \{1, \dots,r\}$ and $k$ non-overlined parts. Let $\overline{B}(\ell_1, \dots, \ell_r;k,n)$ denote the number of overpartitions $\lambda_1+\cdots+ \lambda_s$ of $n$ into non-negative parts coloured $\tilde{u}_1, \dots, \tilde{u}_{2^r-2}$ or $\tilde{u}_{2^r-1}$, such that for all $i \in \{1, \dots,r\}$, $\ell_i$ parts have $u_i$ as one of their primary colours, having $k$ non-overlined parts and satisfying the difference conditions
$$\lambda_i - \lambda_{i+1} \geq w(c(\lambda_{i+1}))+\chi(\overline{\lambda_{i+1}})-1+\delta(c(\lambda_i),c(\lambda_{i+1})),$$
where $\chi(\overline{\lambda_{i+1}}) =1$ if $\lambda_{i+1}$ is overlined and $0$ otherwise.

Then for all $\ell_1, \dots, \ell_r, k, n \geq 0$,
$$\overline{A}(\ell_1, \dots, \ell_r;k,n)= \overline{B}(\ell_1, \dots, \ell_r;k,n).$$
\end{theorem}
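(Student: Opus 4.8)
The approach is to run the new version of the method of weighted words, just as it was run above for Schur's theorem and for Theorem~\ref{th:nondil}. The colouring is already imposed by the statement: a part coloured $\tilde{u}_m$ carries the monomial $\tilde{u}_m=u_1^{\epsilon_1(m)}\cdots u_r^{\epsilon_r(m)}$, so if we mark the size by $q$, mark by a variable $a_i$ every part having $u_i$ among its primary colours, and mark the non-overlined parts by $d$, then
$$\sum \overline{B}(\ell_1,\dots,\ell_r;k,n)\,q^n a_1^{\ell_1}\cdots a_r^{\ell_r}d^k$$
is the series we must identify. On the other side, parts of distinct primary colours in an overpartition counted by $\overline{A}$ do not interact, so such an overpartition splits into $r$ independent overpartitions into non-negative parts, one in each primary colour, and a short computation on each size class gives
$$\sum \overline{A}(\ell_1,\dots,\ell_r;k,n)\,q^n a_1^{\ell_1}\cdots a_r^{\ell_r}d^k = \prod_{i=1}^{r}\frac{(-a_i;q)_{\infty}}{(a_id;q)_{\infty}}.$$
So Theorem~\ref{th:andrewsnondil} amounts to proving that the first series equals this infinite product.

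For $|q|,|a_i|,|d|<1$ and each coloured, possibly overlined, integer $N$, I would let $G_N(q;a_1,\dots,a_r,d)$ be the generating function for the coloured overpartitions satisfying the difference conditions all of whose parts are at most $N$ in the ordering $\tilde{u}_1<\cdots<\tilde{u}_{2^r-1}$ (overlined parts coming, say, just above the non-overlined parts of the same size and colour), and let $G_\infty$ be its limit as $N$ runs through the whole ordering, so that $G_\infty$ is the series above. Splitting on whether the largest part reaches the bound $N$ and, when it does, removing it --- the new largest part being then constrained by $\lambda_i-\lambda_{i+1}\geq w(c(\lambda_{i+1}))+\chi(\overline{\lambda_{i+1}})-1+\delta(c(\lambda_i),c(\lambda_{i+1}))$ --- produces a system of recurrences expressing each $G_N$ through the $G$'s at smaller coloured integers, with a factor $q^{j}\tilde{u}_m$, times $d$ when $N$ is non-overlined, for $N$ of size $j$ and colour $\tilde{u}_m$; one closes the system with $G_N=1$ at the bottom of the ordering and $G_N=0$ below it. For $r=1$ this is the overpartition analogue of the three Schur recurrences of the introduction, and the difference condition then reduces to the ordinary overpartition condition.

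The heart of the argument is to extract $G_\infty$ from this system, and I would do it by induction on the number $r$ of primary colours. The composite colours not involving the largest primary colour $u_r$ are exactly the $2^{r-1}-1$ composite colours of the $(r-1)$-colour problem, and every composite colour $\tilde{u}_m$ containing $u_r$ has $z(\tilde{u}_m)=u_r$, hence $\delta(\tilde{u}_m,\tilde{u}_j)=0$ for all $j$. Exploiting this, one combines the finite-level recurrences --- in the spirit of the reduction $G_{(k+2)_{ab}}(q;a,b)=(1+aq)(1+bq)G_{k_b}(q;aq,bq)$ for Schur's theorem and of the four identities in the proof idea of Theorem~\ref{th:nondil}, but now also picking up denominators coming from the repeated non-overlined parts that overpartitions allow --- into an identity tying the $G_k$'s for $r$ colours to the $G_k$'s for $r-1$ colours; letting $k\to\infty$ then yields
$$G_\infty^{(r)}(q;a_1,\dots,a_r,d)=\frac{(-a_r;q)_{\infty}}{(a_rd;q)_{\infty}}\,G_\infty^{(r-1)}(q;a_1,\dots,a_{r-1},d),$$
and the induction closes with the immediate base case. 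Exactly as in the proof idea of Theorem~\ref{th:main}, if combining the recurrences directly is awkward one can instead pass between recurrences and $q$-difference equations as in \cite{Doussegene,Doussegene2} and finish with Appell's lemma.

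The main obstacle is the combinatorics of this combination step. With $2^r-1$ composite colours, doubled by the overlining, the system has on the order of $2^{r+1}$ coupled recurrences, and merging them rests on a case analysis driven entirely by the functions $v$, $z$, $w$, $\delta$ and by the way overlining enlarges the required gap; finding the exact intermediate identity and then carrying the induction on $k$ through it is where essentially all of the work lies. Unlike the distinct-parts cases treated earlier, that identity has to accommodate repeated non-overlined parts, so an auxiliary series $E_k$ for partitions with largest part equal to $k$ --- as in the proof idea of Theorem~\ref{th:main} --- is likely to be needed, and one must track carefully how the denominators build up so that the iteration lands on exactly $\prod_{i=1}^{r}(-a_i;q)_{\infty}/(a_id;q)_{\infty}$.
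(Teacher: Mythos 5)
Your setup is right: the product $\prod_{i=1}^{r}(-a_i;q)_{\infty}/(a_id;q)_{\infty}$ is indeed the generating function for the overpartitions counted by $\overline{A}$, and the overall shape of the argument --- a system of recurrences for generating functions with a bounded part, combined into a reduction from $r$ primary colours to $r-1$, iterated to peel off one factor $(-a_r;q)_{\infty}/(a_rd;q)_{\infty}$ at a time --- is consistent with how the method is run elsewhere in the paper. But you have missed the one substantive technical point the paper makes about this particular theorem, and it affects whether your recursion closes as written. You condition on the \emph{largest} part and propose to remove it. Here the minimal gap $w(c(\lambda_{i+1}))+\chi(\overline{\lambda_{i+1}})-1+\delta(c(\lambda_i),c(\lambda_{i+1}))$ is governed chiefly by the \emph{smaller} part $\lambda_{i+1}$: after deleting the largest part $\lambda_1$, the bound imposed on the new largest part $\lambda_2$ depends on $\lambda_2$'s own colour (through $w$, which ranges from $1$ to $r$) and on whether $\lambda_2$ is overlined. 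So ``the remaining overpartition has largest part at most $N'$'' is not a statement about a single coloured integer $N'$, and the equation $G_N = G_{N^-} + (\text{weight})\cdot G_{N'}$ you describe does not exist in that form; you would have to split the right-hand side over all $2^{r+1}-2$ possibilities for the colour and overlining of $\lambda_2$, which is exactly the blow-up the paper avoids. The paper's stated remedy is to put the restriction on the \emph{smallest} part instead: removing the smallest part $\lambda_s$ leaves a bound on $\lambda_{s-1}$ that is determined by the removed part up to the single binary term $\delta(c(\lambda_{s-1}),c(\lambda_s))$, which the ordering of the coloured integers absorbs just as in the Schur recurrences.

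Beyond that, the heart of your plan --- the identity tying the $r$-colour system to the $(r-1)$-colour system --- is asserted rather than exhibited, and you acknowledge as much; since the paper itself defers all details to \cite{Dousseunif}, I cannot hold the missing computation against you, but as it stands the proposal is a plausible programme rather than a proof, and its first concrete step needs to be reoriented from largest part to smallest part before the programme can be carried out.
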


Theorem~\ref{th:andrewsnondil} generalises and unifies Schur's theorem~\cite{Schur}, its weighted words version due to Alladi and Gordon~\cite{Alladi} (Theorem~\ref{schurnondil}), its overpartition version due to Lovejoy~\cite{Lovejoy}, two generalisations of Schur's theorem due to Andrews~\cite{Andrews2,Andrews1}, their weighted words version due to Corteel-Lovejoy~\cite{CorLov} and generalisations of Andrews' theorems due to the author\cite{Doussegene,Doussegene2}.
All these generalisations of Schur's theorem are summarised in Figure~\ref{fig:gene}, where $A \longrightarrow B$ means that the theorem corresponding to the infinite product $A$ is generalised by the theorem corresponding to the infinite product $B$.

\begin{figure}
\label{fig:gene}
\caption{The generalisations of Schur's theorem}
\begin{center}
\includegraphics[width=0.9\textwidth]{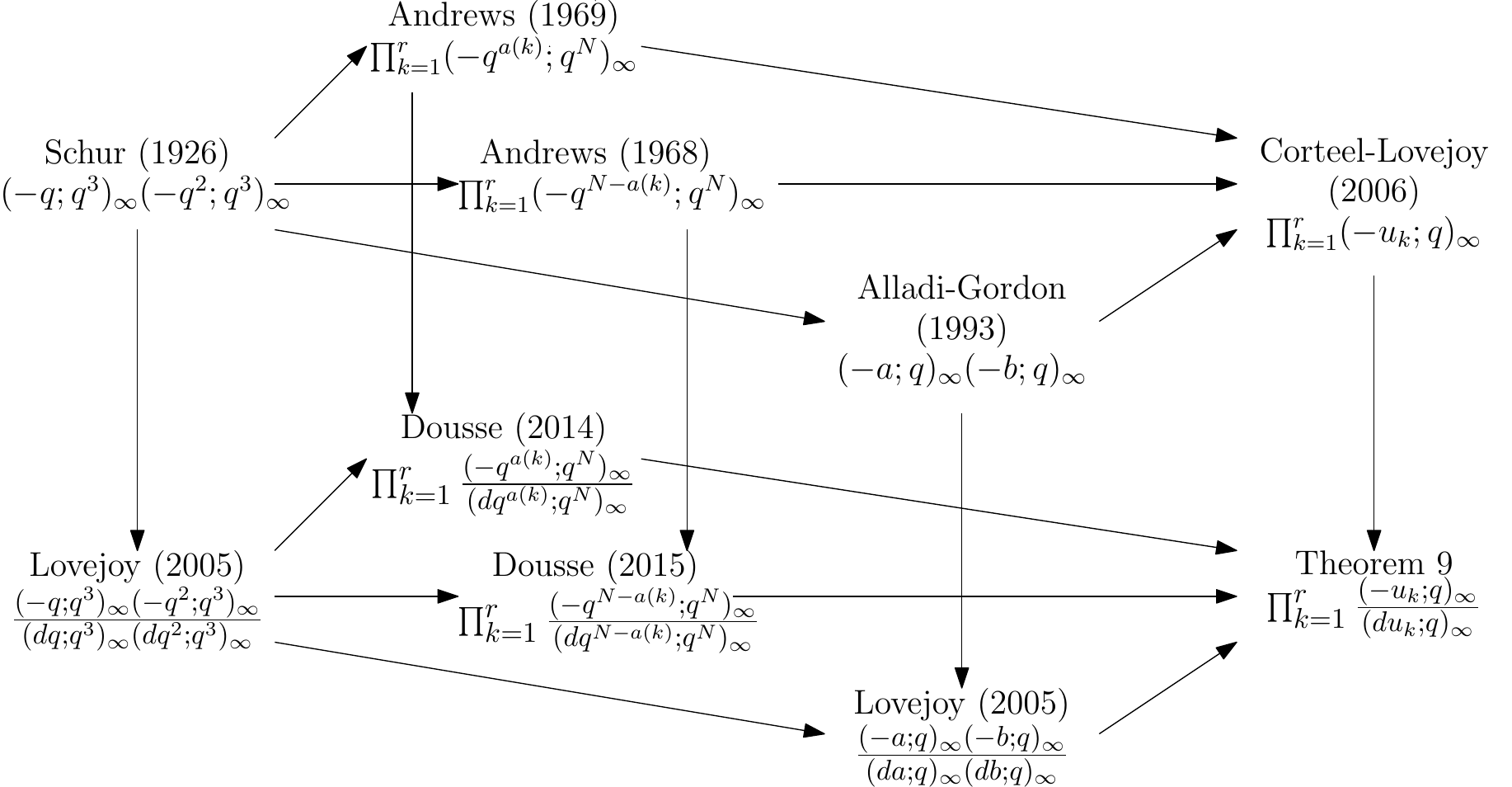}
\end{center}
\end{figure}

Here again, the minimal partitions do not seem easy to obtain as there are many colours and the difference conditions are quite intricate. Therefore our method is easier to apply. However, given that in Theorem~\ref{th:andrewsnondil}, the minimal difference between $\lambda_i$ and $\lambda_{i+1}$ depends on certain conditions on the smaller smaller part $\lambda_{i+1}$, it is more convenient to find $q$-difference equations on the generating functions with an added restriction on the smallest part rather than on the largest part. Details of the proof can be found in the preprint~\cite{Dousseunif}.

\section{Conclusion}
This new version of the method of weighted words seems to have a wide range of application, and it would be interesting to see if it can be used to prove refinements of other partition identities with intricate difference conditions, such as another theorem of Primc~\cite{Primc} which involves difference conditions given by a matrix of size $9 \times 9$.

Another interesting question would be to find representation-theoretic interpretations for the new colour variables added in the identities of Siladi\'c and Primc.

\bibliographystyle{alpha}     

\bibliography{sample}

\end{document}